 \newtheorem{thm}{Theorem}[section]
\newtheorem{cor}[thm]{Corollary}
\newtheorem{prop}[thm]{Proposition}
\newtheorem{lem}[thm]{Lemma}
\newtheorem{rem}[thm]{Remark}
\theoremstyle{definition}
\newtheorem{defn}[thm]{Definition}
\begin{document}
 \title[A characterization of always solvable trees]
 {A characterization of always solvable trees in the Lights Out game using the activation types of vertices}
 \author{Ahmet Batal}
 \address{Department of Mathematics\\ Izmir Institute of Technology\\Izmir, TURKEY}
\email{ahmetbatal@iyte.edu.tr}
 \date{}
\begin{abstract}
Lights out is a game that can be played on any simple graph $G$. A configuration assigns one of the two states \emph{on} or \emph{off} to each vertex. For a given configuration, the aim of the game is to turn all vertices \emph{off} by applying a push pattern on vertices, where each push switches the state of the vertex and its neighbors. If every configuration of vertices is solvable, then we say that the graph is always solvable. We introduce a concept which we call the activation types of vertices and we prove several characterization results of trees by using this concept. We showed that all always solvable trees different than the star tree can be seen as the join graph of its two always solvable subtrees. We call the dimension of the space of null-patterns, which leave configurations unchanged, the nullity of the graph $G$. We show that the nullity of a tree can be characterized by the cardinality of its minimal partition into always solvable subtrees. We also showed that nullity of a tree is less than the number of its even degree vertices.
\end{abstract}

\keywords{Lights Out, all-ones problem, odd dominating set, parity domination, parity dimension.}
 \maketitle

 \section{Introduction}

Lights out game on an undirected graph $G(V,E)$ is played as follows. Each vertex $v$ has a state which is either \emph{on} or \emph{off}. When a vertex is pushed, the vertex itself and all of its neighbors switch state. This push is called \emph{the activation of vertex} $v$. For any given initial \emph{on/off} configuration of the vertices, the aim of the game is to turn all the vertices \emph{off} by a sequence of activations. It is easy to observe that the order of the activations has no importance nor the act of activating a vertex more than once. Hence, the sequence of activations can be identified by the set $P$ of activated vertices, which we call an \emph{activation pattern} (or simply a \emph{pattern}). Similarly, each initial configuration can be identified by a set $C$ such that $v\in C$ if the state of $v$ is \emph{on} in the configuration. If there exists an activation pattern $P$ which turns all lights \emph{off} for a given initial configuration $C$, then the configuration $C$ is called \emph{solvable} and $P$ is called \emph{a solving pattern for} $C$ (we may also say $P$ \emph{solves} $C$).
If every configuration is solvable, then the graph $G$ is called \emph{always solvable} (it is also called \emph{all parity realizable}, \emph{always winnable}, \emph{universally solvable} by other authors \cite{Amin96}, \cite{Giffen13}, \cite{Fleischer13}).

Let the order of $V$ be $n$ and let $\{v_1,...,v_n\}$ be an enumeration of $V$. Then any subset $S\subseteq V$ can be represented by its \emph{characteristic column vector} $\mathbf{s}=(s_1,...,s_n)^t$ where $\mathbf{s}(v_i):=s_i=1$ if $v_i\in S$, and $s_i=0$ otherwise. Hence, patterns or configurations can be identified by characteristic vectors of sets as well. $N[v]=\{u\in V \;|\; u=v \;\text{or}\; u \;\text{is adjacent to}\; v\}$ is called \emph{the closed neighborhood set of vertex} $v$, and $n\times n$ matrix $N:=N(G)$ whose $i$th column is the characteristic vector of $N[v_i]$ is called \emph{the closed adjacency matrix of} $G$. We denote Kernel, column space, and row space of $N$  by $Ker(N)$, $Col(N)$, and $Row(N)$, respectively. Let $\nu(G):=dim(Ker(N(G))$ and $r(G):=dim(Col(N(G))$. We call $\nu(G)$ \emph{nullity of} $G$ (Amin et al. \cite{Amin98} call it parity dimension of $G$). By rank nullity theorem, we have $\nu(G)+r(G)=n$. It was first observed by Sutner \cite{Sutner89},\cite{Sutner90} that an activation pattern $\mathbf{p}$ is a solving pattern for an initial configuration $\mathbf{c}$ \emph{iff}
\begin{equation}\label{Npc}
N\mathbf{p}=\mathbf{c}
\end{equation}
 over the field $\mathbb{Z}_2$. This simple linear algebraic formulation of the game leads to several realizations \cite{Sutner89},\cite{Sutner90},\cite{Amin98}:

(\emph{R 1}) A configuration $\mathbf{c}$ is solvable \emph{iff} $\mathbf{c} \in Col(N) $. Hence, graph $G$ is always solvable  \emph{iff} $r(G)=n$, which is equivalent to say that $\nu(G)=0 $.

(\emph{R 2}) Number of solving patterns of a given configuration is $2^{\nu(G)}$. Indeed, if $\mathbf{p}$ solves $\mathbf{c}$, then $\mathbf{p}+\boldsymbol{\ell}$ solves $\mathbf{c}$ as well for every $\boldsymbol{\ell}\in Ker(N)$. Members of $Ker(N)$ are called \emph{null patterns} since they have no effect on a given configuration.

(\emph{R 3}) Since $N$ is a symmetric matrix, we have $Col(N)=Row(N)$. Moreover, $Row(N)=Ker(N)^\bot$, where $S^\bot$ denotes the orthogonal complement of a set $S$ with respect to \emph{the dot product} $\mathbf{x}\cdot \mathbf{y} := \mathbf{x}^t \mathbf{y}$. Hence, a configuration is solvable  \emph{iff} it is orthogonal to every null pattern.


For a given graph, finding the pattern $\mathbf{p}$ which solves all lights on configuration $\mathbf{c}=\mathbf{1}$ is called \emph{all-ones problem} and the configuration $\mathbf{1}$ is called \emph{all-ones configuration}. The reason why it deserves a special name is the fact that all-ones problem is solvable for all graphs \cite{Sutner89} (see also \cite{Caro96}, \cite{Cowen99}, \cite{Erikson04}). In other words, for every graph $G$ there exists a pattern $\mathbf{p}$ such that $N(G)\mathbf{p}=\mathbf{1}$.


The connection between the nullities of graphs related to each other by some type of graph operation such as edge/vertex join or removal was first considered by Amin et al. \cite{Amin02}. The same subject was investigated by  Giffen et al. \cite{Giffen13}, Edwards et al. \cite{Edwards10} and Ballard et al. \cite{Ballard19} for a generalized version of the game where \eqref{Npc} is considered over $\mathbb{Z}_k$ for some integer $k\geq 2.$ Let us denote the graph obtained by removing a vertex $u$ and all its incident edges from a graph $G$ by $G-u$. In \cite{Amin02} and \cite{Ballard19}, the difference $\nu(G-u)-\nu(G)$ plays an important role in their analysis. We adapt the terminology of \cite{Ballard19} and call this difference \emph{the null difference of vertex} $u$ and denote it by $nd(u)$. It turns out that $nd(u)$ can be either $-1$, $0$, or $1$; and as a result, vertices can be categorized by their null difference number into three different classes \cite{Amin02}, \cite{Ballard19}.

 We show that these classes can be realized from a different point of view in the case of classical lights out game where $k=2.$ Indeed, there is another way of categorizing vertices by checking in how many solving patterns of all-ones configuration a vertex is activated. This categorization again leads to three different classes. We prove that the first set of classes determined by the null differences of vertices and the second one determined by the activation of the vertices under solving patterns of all-ones configuration, coincide with each other.

Seeing this set of classes from the second point of view has important advantages. For example, this point of view allows us to track how the class of some vertices changes under some special graph operations (see Theorem \ref{main}), and enables us to make three observations about trees. Our first observation is that the nullity of a tree can be characterized by the cardinality of its minimal partition into always solvable subtrees as we showed in Theorem \ref{partition}. Our second observation is related to the characterization of always solvable trees. In Theorem \ref{chr2} we showed that an always solvable tree can be characterized as some special join tree of its two always solvable subtrees if it is different than the star tree. This gives an alternative to the previous two characterizations of always solvable trees given in \cite{Amin96} and \cite{Amin02}. We also showed in Theorem \ref{thmN} that the nullity of a tree with at least one even degree vertex is less than the number of its even degree vertices.


\section{Activation types of vertices}

\begin{defn}
We define the inverse configuration $\overline{\mathbf{c}}$ of a configuration $\mathbf{c}$ as $\overline{\mathbf{c}}=\mathbf{c}+\mathbf{1}$. For a given vertex $u$, we denote the configuration, where only the state of $u$ is on by $\mathbf{c}_u$ i.e.; $\mathbf{c}_u(v)=1$ iff $v=u$.
\end{defn}

Let $u$ and $w$ be two vertices of a graph $G$ which are adjacent to each other. Let us denote the graph obtained by deleting the edge $e=(u,w)$ between $u$ and $w$ by $G-e$. Let $V(G)=\{v_1,...v_n\}$. Then the difference between the neighborhood matrices $N(G-e)$ and $N(G)$ is the symmetric matrix $J$ where $J_{ik}=J_{ki}=1$ if $v_i=u,\; v_k=w$, and $0$ otherwise. Let $\mathbf{p}$ be a solving pattern for a configuration $\mathbf{c}$ on $G$. Then the same pattern applied to the graph $G-e$ solves the configuration
\begin{equation}
\label{edgedel}
\tilde{\mathbf{c}}=N(G-e)\mathbf{p}=N(G)\mathbf{p}+J\mathbf{p}=\mathbf{c}+\mathbf{p}(w)\mathbf{c}_u+\mathbf{p}(u)\mathbf{c}_w.
\end{equation}
In other words, for a given pattern $\mathbf{p}$, deletion of an edge $e=(u,w)$ may only change the states of vertices $u$ and $w$ in the corresponding configuration, and the state of $u$ ($w$) changes if and only if $w$ ($u$) is activated in $\mathbf{p}$.

\begin{lem}
\label{NSS}
Let $u$ be a vertex such that the configuration $\overline{\mathbf{c}_u}$ is solvable. Then $u$ is not activated in any solving pattern $\mathbf{p}$ for $\overline{\mathbf{c}_u}$. In other words $\mathbf{p}(u)=0$ for all $\mathbf{p}$ satisfying $N\mathbf{p}=\overline{\mathbf{c}_u}$.
\end{lem}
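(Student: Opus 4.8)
The plan is to read off the single bit $\mathbf{p}(u)$ by evaluating the scalar $\mathbf{p}^t N\mathbf{p}$ in two different ways and comparing the results. For the first evaluation I would feed in the defining equation $N\mathbf{p}=\overline{\mathbf{c}_u}=\mathbf{c}_u+\mathbf{1}$ directly. Using the dot product $\mathbf{x}\cdot\mathbf{y}=\mathbf{x}^t\mathbf{y}$ and the obvious identities $\mathbf{p}\cdot\mathbf{c}_u=\mathbf{p}(u)$ and $\mathbf{p}\cdot\mathbf{1}=\sum_v\mathbf{p}(v)$, this gives
\[
\mathbf{p}^t N\mathbf{p}=\mathbf{p}\cdot(N\mathbf{p})=\mathbf{p}\cdot\overline{\mathbf{c}_u}=\mathbf{p}\cdot(\mathbf{c}_u+\mathbf{1})=\mathbf{p}(u)+\mathbf{p}\cdot\mathbf{1}.
\]
This is the expression carrying the information we want: it isolates $\mathbf{p}(u)$ up to the single nuisance term $\mathbf{p}\cdot\mathbf{1}$.

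The second evaluation computes the same scalar straight from the matrix, and here the symmetry and the unit diagonal of $N$ do all the work. Expanding $\mathbf{p}^t N\mathbf{p}=\sum_{i,j}p_i N_{ij}p_j$ over $\mathbb{Z}_2$, the off-diagonal contributions pair up as $N_{ij}p_ip_j+N_{ji}p_jp_i=2N_{ij}p_ip_j=0$ because $N$ is symmetric, so only the diagonal survives. Since $N$ is a \emph{closed} adjacency matrix, every vertex lies in its own closed neighborhood, so $N_{ii}=1$ for every $i$; combining this with $p_i^2=p_i$ in $\mathbb{Z}_2$ yields
\[
\mathbf{p}^t N\mathbf{p}=\sum_i N_{ii}\,p_i^2=\sum_i p_i=\mathbf{p}\cdot\mathbf{1}.
\]

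Comparing the two evaluations forces $\mathbf{p}(u)+\mathbf{p}\cdot\mathbf{1}=\mathbf{p}\cdot\mathbf{1}$, and cancelling $\mathbf{p}\cdot\mathbf{1}$ leaves $\mathbf{p}(u)=0$. Since nothing was assumed about $\mathbf{p}$ beyond $N\mathbf{p}=\overline{\mathbf{c}_u}$, the conclusion holds for every solving pattern, exactly as claimed (the solvability hypothesis is used only to guarantee that such patterns exist, so that the statement is not vacuous). The one point I would treat carefully — and the genuine crux of the argument — is the reduction of the quadratic form to its diagonal: it depends essentially on working over $\mathbb{Z}_2$, so that the cross terms cancel, and on the closed adjacency matrix having $1$'s along the diagonal, the precise feature that distinguishes $N$ from the ordinary (open) adjacency matrix. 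Everything else is a one-line substitution, so beyond stating this diagonal reduction cleanly I do not expect any real obstacle.
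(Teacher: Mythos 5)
Your proof is correct, and it takes a genuinely different route from the paper's. You evaluate the quadratic form $\mathbf{p}^t N\mathbf{p}$ over $\mathbb{Z}_2$ in two ways: substituting $N\mathbf{p}=\mathbf{c}_u+\mathbf{1}$ gives $\mathbf{p}(u)+\mathbf{p}\cdot\mathbf{1}$, while the symmetry of $N$ kills the off-diagonal cross terms in pairs and the unit diagonal of the \emph{closed} adjacency matrix reduces the form to $\sum_i N_{ii}p_i^2=\mathbf{p}\cdot\mathbf{1}$; cancelling yields $\mathbf{p}(u)=0$. Every step checks out, and you correctly flag the two essential ingredients (working over $\mathbb{Z}_2$ and $N_{ii}=1$); this is the same mechanism that underlies Sutner's theorem that the all-ones configuration is always solvable, namely that the diagonal of a symmetric matrix over $\mathbb{Z}_2$ lies in its row space. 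The paper argues quite differently: it inducts on the number of edges, using its edge-deletion identity \eqref{edgedel}. Assuming $\mathbf{p}(u)=1$ with $N\mathbf{p}=\overline{\mathbf{c}_u}$, the condition $\overline{\mathbf{c}_u}(u)=0$ forces an odd number of activated neighbors of $u$; picking an activated neighbor $w$ and deleting the edge $(u,w)$ transfers $\mathbf{p}$ to a solving pattern for $\overline{\mathbf{c}_w}$ on the smaller graph with $\mathbf{p}(w)=1$, contradicting the induction hypothesis. Your argument buys brevity and a global, structure-free computation valid for any graph in one stroke; the paper's induction buys a demonstration of the edge-deletion technique \eqref{edgedel} that it reuses repeatedly afterwards (Lemma \ref{cutedge}, Theorem \ref{main}), so its choice is more about methodological continuity than necessity.
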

\begin{proof}
We prove the lemma by applying induction on the size of the graph. Note that the lemma  holds true for any graph without any edges. Assume that it holds true for any graph with size $n$. Let $G$ be a graph with size $n+1$. Assume for a contradiction that there exists a pattern $\mathbf{p}$  such that $\mathbf{p}(u)=1$ and $N(G)\mathbf{p}=\overline{\mathbf{c}_u}$ for some vertex $u\in V(G)$. Since $\overline{\mathbf{c}_u}(u)=0$ and  $\mathbf{p}(u)=1$, there must exist odd number of activated vertices adjacent to $u$. Let $w$ be one of those adjacent vertices to $u$ such that $\mathbf{p}(w)=1$. Let $e=(u,w)$. Then by \eqref{edgedel} the pattern $\mathbf{p}$ applied to the graph $G-e$ solves the configuration $\overline{\mathbf{\mathbf{c}}_u}+\mathbf{c}_u + \mathbf{c}_w=\mathbf{1}+\mathbf{c}_w=\overline{\mathbf{\mathbf{c}}_w}$. Since $G-e$ has $n$ edges and $\mathbf{p}(w)=1$, this contradicts with the induction hypothesis.

\end{proof}
\begin{defn}
We call a vertex $v$ half-activated if $\boldsymbol{\ell}(v)=1$ for some null-pattern $\boldsymbol{\ell}$,  and fixed otherwise.
\end{defn}
By [\cite{Amin98}, Lemma 1] we know that  for any vertex $v\in V(G)$, either $\boldsymbol{\ell}(v)=0$ for all null-patterns $\boldsymbol{\ell}$ or $\boldsymbol{\ell}(v)=1$ for exactly half of the null-patterns which correspond to $2^{\nu(G)-1}$ patterns. Together with (\emph{R 2}), this gives us the following lemma.
\begin{lem}
Let $\mathbf{c}$ be a solvable configuration on a graph $G$. $v$ is half-activated if and only if $\mathbf{p}(v)=1$ for exactly half of the solving patterns $\mathbf{p}$ for $\mathbf{c}$. $v$ is fixed if and only if either

Case i) $\mathbf{p}(v)=1$  for all solving patterns $\mathbf{p}$ for $\mathbf{c}$, or

Case ii) $\mathbf{p}(v)=0$ for all solving patterns $\mathbf{p}$ for $\mathbf{c}$.

\end{lem}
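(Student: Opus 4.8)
The plan is to exploit the affine structure of the solution set. Since $\mathbf{c}$ is solvable, I would fix any one solving pattern $\mathbf{p}_0$, so that $N\mathbf{p}_0=\mathbf{c}$. The first step is to recall from (\emph{R 2}) that the full set of solving patterns for $\mathbf{c}$ is exactly the coset $\{\mathbf{p}_0+\boldsymbol{\ell}\;|\;\boldsymbol{\ell}\in Ker(N)\}$, and that the map $\boldsymbol{\ell}\mapsto\mathbf{p}_0+\boldsymbol{\ell}$ is a bijection between $Ker(N)$ and this set; in particular there are exactly $2^{\nu(G)}$ solving patterns. This reduces any counting question about solving patterns to the corresponding counting question about null-patterns.

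The second step is to read off the value at the vertex $v$. For any null-pattern $\boldsymbol{\ell}$ we have $(\mathbf{p}_0+\boldsymbol{\ell})(v)=\mathbf{p}_0(v)+\boldsymbol{\ell}(v)$ over $\mathbb{Z}_2$, so the number of solving patterns $\mathbf{p}$ with $\mathbf{p}(v)=1$ is governed entirely by how often $\boldsymbol{\ell}(v)$ equals $0$ or $1$ as $\boldsymbol{\ell}$ ranges over $Ker(N)$. This is precisely the dichotomy supplied by [\cite{Amin98}, Lemma 1], which I would invoke at this point.

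The third step is to split into the two cases. If $v$ is half-activated, then by [\cite{Amin98}, Lemma 1] exactly $2^{\nu(G)-1}$ null-patterns satisfy $\boldsymbol{\ell}(v)=1$ and the remaining half satisfy $\boldsymbol{\ell}(v)=0$; adding the fixed bit $\mathbf{p}_0(v)$ merely interchanges or fixes these two equal halves, so $\mathbf{p}(v)=1$ for exactly half of the solving patterns, regardless of the value of $\mathbf{p}_0(v)$. If $v$ is fixed, then $\boldsymbol{\ell}(v)=0$ for every null-pattern, whence $\mathbf{p}(v)=\mathbf{p}_0(v)$ for every solving pattern $\mathbf{p}$; this common value is $1$ for all solving patterns (Case i) when $\mathbf{p}_0(v)=1$, and $0$ for all of them (Case ii) when $\mathbf{p}_0(v)=0$.

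I do not anticipate a genuine obstacle here, since the argument is a direct translation of the coset description of the solution set together with the already-established distribution of $\boldsymbol{\ell}(v)$ over $Ker(N)$. The only point that deserves a line of care is that translating by the constant bit $\mathbf{p}_0(v)$ preserves the equal split in the half-activated case and collapses to a single value in the fixed case; this is what guarantees that the conclusion is independent of the particular base solution $\mathbf{p}_0$ chosen at the outset.
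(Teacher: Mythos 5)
Your proposal is correct and is precisely the argument the paper intends: the paper derives this lemma by combining (\emph{R 2}) (the solution set is the coset $\mathbf{p}_0+Ker(N)$) with [\cite{Amin98}, Lemma 1] on the distribution of $\boldsymbol{\ell}(v)$ over null-patterns, exactly as you do. You have simply written out explicitly the translation-by-$\mathbf{p}_0(v)$ step that the paper leaves implicit.
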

Above lemma motivates us to make the following definition.
\begin{defn}
For a given solvable configuration $\mathbf{c}$, we call a fixed vertex $v$, $\mathbf{c}$-always-activated or $\mathbf{c}$-never-activated if $v$ satisfies Case (i) or Case (ii), respectively. In the case of $\mathbf{c}=\mathbf{1}$, instead of calling a vertex $\mathbf{1}$-always-activated ($\mathbf{1}$-never-activated), we simply call it always-activated (never-activated). When we say activation type of a vertex we mean its always, never, or half activatedness.
\end{defn}

\begin{lem}
\label{act}
Every always solvable graph has an always-activated vertex.
\end{lem}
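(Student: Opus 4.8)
The plan is to exploit the fact that always solvability forces the all-ones problem to have a \emph{unique} solution, and then to observe that this solution cannot be the zero pattern.

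First I would invoke realization (\emph{R 1}): since $G$ is always solvable, $\nu(G)=0$, equivalently $Ker(N)=\{\mathbf{0}\}$. In particular there are no nontrivial null patterns, so no vertex can be half-activated; every vertex is fixed and is therefore either always-activated or never-activated. This already rules out the half-activated case and lets me concentrate on locating a single always-activated vertex.

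Next I would recall that the all-ones problem is solvable for every graph, so there exists a pattern $\mathbf{s}$ with $N\mathbf{s}=\mathbf{1}$. By (\emph{R 2}) the number of solving patterns for $\mathbf{1}$ equals $2^{\nu(G)}=1$, so $\mathbf{s}$ is the unique solving pattern for the all-ones configuration. Consequently a vertex $v$ is always-activated precisely when $\mathbf{s}(v)=1$, i.e. when $v$ lies in the support of $\mathbf{s}$; this is the translation of the definition of always-activated into a statement about the single pattern $\mathbf{s}$.

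Finally I would note that $\mathbf{s}\neq\mathbf{0}$, since $\mathbf{s}=\mathbf{0}$ would give $N\mathbf{s}=\mathbf{0}\neq\mathbf{1}$ for any nonempty graph, a contradiction. Hence the support of $\mathbf{s}$ is nonempty, and any $v$ with $\mathbf{s}(v)=1$ is an always-activated vertex. There is no genuinely hard step here; the only point requiring care is the appeal to uniqueness of the solving pattern, which is exactly what $\nu(G)=0$ buys us, together with the trivial observation that the target configuration $\mathbf{1}$ is nonzero.
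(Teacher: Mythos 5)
Your proof is correct and follows essentially the same route as the paper: both arguments use $\nu(G)=0$ to get uniqueness of the solving pattern for the all-ones configuration (via (\emph{R 2}) and solvability of the all-ones problem), observe that this pattern is nonzero, and conclude that any vertex in its support is always-activated. Your explicit remark that half-activated vertices are excluded and that the graph must be nonempty only makes implicit steps of the paper's proof visible.
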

\begin{proof}
Let $G$ be an always solvable graph. Since $\nu(G)=0$, there is a unique solution $\mathbf{p}$ for the all-ones configuration $\mathbf{1}$ on $G$. Because $\mathbf{p}\neq \mathbf{0}$ there must be a vertex $w\in V(G)$ with $\mathbf{p}(w)\neq 0$, i.e.; $w$ is activated. Since $\mathbf{p}$ is the only solution, $w$ is always-activated.
\end{proof}

As a consequence of \emph{(R 3)}, a vertex $v$ is fixed if and only if $\mathbf{c}_v$ is solvable; see [\cite{Ballard19}, Proposition 2.4]. Moreover, we have the following proposition.
\begin{prop}
A vertex $v$ is always-activated if and only if  $v$ is $\mathbf{c}_v$-always-activated.
\end{prop}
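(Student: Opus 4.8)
The plan is to reduce the whole statement to the single identity $\mathbf{p}(v)=\mathbf{q}(v)$ relating a solving pattern $\mathbf{p}$ of $\mathbf{1}$ to a solving pattern $\mathbf{q}$ of $\mathbf{c}_v$, and to extract this identity from Lemma \ref{NSS}.

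First I would dispose of the trivial case. Both properties in the statement presuppose that $v$ is fixed: ``always-activated'' is by definition a property of fixed vertices, while ``$v$ is $\mathbf{c}_v$-always-activated'' requires $\mathbf{c}_v$ to be solvable, which by (\emph{R 3}) (equivalently \cite{Ballard19}, Proposition 2.4) holds precisely when $v$ is fixed. Hence if $v$ is half-activated neither side holds and there is nothing to prove, so I assume $v$ is fixed. Then $\mathbf{c}_v$ is solvable, and since $\mathbf{1}$ is always solvable, both $\mathbf{1}$ and $\mathbf{c}_v$ are solvable configurations; because $v$ is fixed, the value $\mathbf{p}(v)$ is independent of the chosen solving pattern $\mathbf{p}$ for each of them.

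The key step is as follows. Since $Col(N)$ is a subspace and both $\mathbf{1}$ and $\mathbf{c}_v$ are solvable, the inverse configuration $\overline{\mathbf{c}_v}=\mathbf{1}+\mathbf{c}_v$ is solvable as well. Let $\mathbf{p}$ be any solving pattern for $\mathbf{1}$ and $\mathbf{q}$ any solving pattern for $\mathbf{c}_v$. Then $N(\mathbf{p}+\mathbf{q})=\mathbf{1}+\mathbf{c}_v=\overline{\mathbf{c}_v}$, so $\mathbf{p}+\mathbf{q}$ is a solving pattern for $\overline{\mathbf{c}_v}$. As $\overline{\mathbf{c}_v}$ is solvable, Lemma \ref{NSS} applies and gives $(\mathbf{p}+\mathbf{q})(v)=0$, that is $\mathbf{p}(v)=\mathbf{q}(v)$ over $\mathbb{Z}_2$.

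Finally I would conclude. As observed, $\mathbf{p}(v)$ takes a single value over all solving patterns $\mathbf{p}$ of $\mathbf{1}$, and $\mathbf{q}(v)$ a single value over all solving patterns $\mathbf{q}$ of $\mathbf{c}_v$; the identity $\mathbf{p}(v)=\mathbf{q}(v)$ shows that these two values coincide. Therefore $\mathbf{p}(v)=1$ for every solving pattern of $\mathbf{1}$ if and only if $\mathbf{q}(v)=1$ for every solving pattern of $\mathbf{c}_v$, which is exactly the claim that $v$ is always-activated if and only if $v$ is $\mathbf{c}_v$-always-activated. The only genuine content is spotting that $\mathbf{p}+\mathbf{q}$ solves $\overline{\mathbf{c}_v}$, so that Lemma \ref{NSS} can be invoked; everything else is bookkeeping about fixedness, and I expect that recognition of the right configuration to feed into Lemma \ref{NSS} to be the main (if modest) obstacle.
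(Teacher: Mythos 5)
Your proposal is correct and takes essentially the same route as the paper: the paper's proof likewise takes arbitrary solving patterns for $\mathbf{1}$ and $\mathbf{c}_v$, observes their sum solves $\overline{\mathbf{c}_v}$, and applies Lemma \ref{NSS} to get $(\mathbf{s}+\mathbf{p})(v)=0$. Your additional bookkeeping about the half-activated case (where $\mathbf{c}_v$ is unsolvable and both sides fail) is a harmless elaboration the paper leaves implicit.
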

\begin{proof}
Let $\mathbf{s}$ and $\mathbf{p}$ be arbitrary solving patterns for $\mathbf{1}$ and $\mathbf{c}_v$, respectively. Then $\mathbf{s}+\mathbf{p}$ solves $\overline{\mathbf{c}_v}$. By Lemma \ref{NSS} $(\mathbf{s}+\mathbf{p})(v)=0.$ Thus, $\mathbf{s}(v)=\mathbf{p}(v)$.
\end{proof}

On the other hand, by [\cite{Ballard19}, Proposition 2.8, Proposition 2.10] we have $nd(v)=0$ ($nd(v)=1$) if and only if $v$ is $\mathbf{c}_v$-always-activated ($\mathbf{c}_v$-never-activated).
Thus, we reach the following identification, which connects the null difference number of a vertex to its activation type in the case of classical lights out game.

\begin{prop}
\label{actnd}
A vertex $v$ is always-activated if and only if $nd(v)=0$ and never-activated if and only if $nd(v)=1$.
\end{prop}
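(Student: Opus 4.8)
The plan is to assemble the equivalence from two ingredients that are already available: the preceding Proposition, which ties the all-ones classification of a fixed vertex to its $\mathbf{c}_v$-classification, and the two cited results of Ballard et al., which translate the $\mathbf{c}_v$-classification into the language of null differences. Since \emph{always-activated} and \emph{never-activated} are defined only for fixed vertices, I would begin by fixing a vertex $v$ for which $\mathbf{c}_v$ is solvable, so that all four labels in play are meaningful and $v$ lies in exactly one class of each classification.

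The first step is to observe that the proof of the preceding Proposition in fact delivers both halves of the correspondence simultaneously. That argument produces the identity $\mathbf{s}(v)=\mathbf{p}(v)$ for arbitrary solving patterns $\mathbf{s}$ of $\mathbf{1}$ and $\mathbf{p}$ of $\mathbf{c}_v$; because $v$ is fixed, each side is independent of the chosen pattern, so this single common value at once determines whether $v$ is always- or never-activated and whether it is $\mathbf{c}_v$-always- or $\mathbf{c}_v$-never-activated. Reading the identity with common value $1$ gives the stated equivalence ``always-activated $\iff$ $\mathbf{c}_v$-always-activated,'' and reading it with common value $0$ gives, at no extra cost, the companion equivalence ``never-activated $\iff$ $\mathbf{c}_v$-never-activated.''

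The second step is a matter of substitution. By [\cite{Ballard19}, Proposition 2.8, Proposition 2.10] we have $nd(v)=0$ precisely when $v$ is $\mathbf{c}_v$-always-activated, and $nd(v)=1$ precisely when $v$ is $\mathbf{c}_v$-never-activated. Composing this with the two equivalences from the first step yields ``$v$ always-activated $\iff nd(v)=0$'' and ``$v$ never-activated $\iff nd(v)=1$,'' which is exactly the assertion.

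I expect no genuine obstacle here, as the content lies entirely in the bookkeeping. The single point deserving care is the recognition that the preceding Proposition, although phrased only for the always-activated case, actually establishes the equality $\mathbf{s}(v)=\mathbf{p}(v)$ of the two fixed values and hence settles the never-activated case with no additional argument. Once that is noted, the proposition follows by transitivity of the three equivalences.
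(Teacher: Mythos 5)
Your proposal is correct and takes essentially the same route as the paper, which presents Proposition \ref{actnd} as the immediate composition of the preceding proposition (always-activated $\iff$ $\mathbf{c}_v$-always-activated) with \cite{Ballard19}, Propositions 2.8 and 2.10, exactly as you do. Your one refinement---observing that the identity $\mathbf{s}(v)=\mathbf{p}(v)$ established in the preceding proof also yields the never-activated equivalence, which the paper leaves tacit (it follows equally by complementation among fixed vertices)---is sound and fills in the only bookkeeping the paper omits.
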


\section{Join of Graphs}

 Let $G_1$ and $G_2$ be two nonempty disjoint graphs and $H:=G_1uwG_2$ be the join graph constructed by joining the vertices $u$ of $G_1$ and $w$ of $G_2$ by an edge. Let $|G_i|=n_i$ where $i\in  \{1,2\}$. Then we can enumerate the vertices of $H$ as $\{v_1,...,v_{n_1+n_2}\}$ such that $V(G_1)=\{v_1,...,v_{n_1}\}$ with $v_{n_1}=u$ and $V(G_2)=\{v_{n_1+1},...,v_{n_1+n_2}\}$ with $v_{n_1+1}=w$. This way, we can represent every pattern $\mathbf{p}$ of $H$ as $\mathbf{p}^t=(\mathbf{p_1}^t,\mathbf{p_2}^t)$ where $\mathbf{p_i}$  is the restriction of $\mathbf{p}$ on $V(G_i)$.

 Let $\mathbf{s}$ be a solving pattern for the all-ones configuration on $H$, i.e.; $N(H)\mathbf{s}=\mathbf{1}$. Note that by \eqref{edgedel} we have

\begin{align}
\label{Ns1}
N(G_1)\mathbf{s_1}= \left\{ \begin{array}{cc}
                \mathbf{1} &  \text{if}\;\;\mathbf{s}(w)=0 \\
                \overline{\mathbf{c}_u} &  \text{if}\;\; \mathbf{s}(w)=1 \\
                \end{array} \right\},
\end{align}
and similarly
\begin{align}
N(G_2)\mathbf{s_2}= \left\{ \begin{array}{cc}
                \mathbf{1} &  \text{if}\;\;\mathbf{s}(u)=0 \\
              \overline{\mathbf{c}_w} &  \text{if}\;\; \mathbf{s}(u)=1 \\
                \end{array} \right\},
\end{align}
where we use the same notation $\mathbf{1}$ to denote the all-ones configurations of different graphs.

We already know that in any solving pattern for the all-ones configuration on a tree, adjacent vertices cannot be both activated \cite{Conlon99}. Moreover, we have the following lemma.
\begin{lem}
\label{cutedge}
Let $H$ be a graph with a cut edge $e=(u,w)$. Then in any solving pattern $\mathbf{s}$ for the all-ones configuration on $H$, vertices $u$ and $w$ cannot be both activated i.e.; either $\mathbf{s}(u)=0$ or $\mathbf{s}(w)=0$.
\end{lem}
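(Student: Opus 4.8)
The plan is to reduce the statement to Lemma \ref{NSS} via the edge-deletion identity \eqref{edgedel} already recorded in this section. Since $e=(u,w)$ is a cut edge, deleting it disconnects $H$ into exactly two components; let $G_1$ be the component containing $u$ and $G_2$ the component containing $w$. Then $H=G_1uwG_2$ in the notation of this section, and the solving pattern $\mathbf{s}$ splits as $\mathbf{s}^t=(\mathbf{s_1}^t,\mathbf{s_2}^t)$ with $\mathbf{s_i}$ the restriction of $\mathbf{s}$ to $V(G_i)$. In particular, because $G_1$ and $G_2$ are genuine graphs equal to the two components of $H-e$, equation \eqref{Ns1} applies verbatim.

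First I would argue by contradiction, assuming $\mathbf{s}(u)=\mathbf{s}(w)=1$. Feeding the hypothesis $\mathbf{s}(w)=1$ into \eqref{Ns1} gives $N(G_1)\mathbf{s_1}=\overline{\mathbf{c}_u}$, so $\overline{\mathbf{c}_u}$ is a solvable configuration on $G_1$ with solving pattern $\mathbf{s_1}$. I would then invoke Lemma \ref{NSS} on the graph $G_1$: since $\overline{\mathbf{c}_u}$ is solvable there, every solving pattern leaves $u$ unactivated, hence $\mathbf{s_1}(u)=0$, that is, $\mathbf{s}(u)=0$. This contradicts $\mathbf{s}(u)=1$ and completes the proof. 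By the symmetric use of the second displayed identity, $\mathbf{s}(u)=1$ would equally force $\mathbf{s_2}(w)=0$, so either route closes the argument.

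The proof is short because the substantive work is already carried by \eqref{edgedel}, by \eqref{Ns1}, and by Lemma \ref{NSS}; no estimate or further induction is required. The only step demanding care is the opening one, namely observing that a cut edge is precisely the configuration $H=G_1uwG_2$, so that the two sides of the edge are honest graphs to which \eqref{Ns1} and Lemma \ref{NSS} may be applied. Once that identification is made, the contradiction is immediate. As a sanity check, every edge of a tree is a cut edge, so this lemma recovers and generalizes the known fact, cited just above, that adjacent vertices of a tree are never both activated in a solving pattern for the all-ones configuration.
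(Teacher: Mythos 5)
Your proof is correct and is essentially the paper's own argument, step for step: write $H=G_1uwG_2$ along the cut edge, feed $\mathbf{s}(w)=1$ into \eqref{Ns1} to get $N(G_1)\mathbf{s_1}=\overline{\mathbf{c}_u}$ with $\mathbf{s_1}(u)=\mathbf{s}(u)=1$, contradicting Lemma \ref{NSS}. The only detail to patch is your opening claim that deleting a cut edge leaves exactly two components, which holds only for connected $H$; the paper handles this by first assuming $H$ connected without loss of generality (a solving pattern restricts to one on the component containing $e$), and with that one-line reduction your argument coincides with the paper's.
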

\begin{proof}
Without loss of generality we can assume $H$ is connected. Further, we can see $H$ as $H=G_1uwG_2$ where $G_1$ ($G_2$) is the connected component containing $u$ ($w$) in $G-e$. Assume for a contradiction that there exists a solving pattern $\mathbf{s}$ for the all-ones configuration on $H$ such that $\mathbf{s}(u)=\mathbf{s}(w)=1$. Then by \eqref{Ns1} $N(G_1)\mathbf{s_1}=\overline{\mathbf{c}_u}$ with $\mathbf{s_1}(u)=\mathbf{s}(u)=1$, which contradicts with Lemma \ref{NSS}.
\end{proof}

\begin{defn}

Let $G_1,...,G_n$ be disjoint graphs. We define the join graph $H=G_1u_1u_2G_2u_2...u_nG_n$ as the graph obtained by joining the the vertices $u_i$ of $G_i$ and $u_{i+1}$ of $G_{i+1}$ by an edge for each $i\in\{1,...,n-1\}$.
Moreover, we say a join graph $H=G_1u_1u_2G_2u_2...u_nG_n$ is a Type-$(a_1,...,a_n)$ connection of disjoint graphs $G_1,...,G_n$, respectively, where $a_i=\mathcal{A}$ if $u_i$ is always-activated in $G_i$, $a_i=\mathcal{N}$ if $u_i$ is never-activated in $G_i$, and $a_i=\mathcal{H}$ if $u_i$ is half-activated in $G_i$ for each  $i\in\{1,...,n\}$.

\end{defn}

\begin{thm}
\label{main}
Let $G_1$ and $G_2$ be disjoint graphs with $u\in V(G_1)$, $w\in V(G_2)$, and $H=G_1uwG_2$. Let $\mathbf{s}$ be a solving pattern for the all-ones configuration on $H$, and $\Delta\nu:= \nu(H)-\nu(G_1)-\nu(G_2)$. Then we have the following table:
\begin{table}[h]
\label{table1}
\caption{Join Graph $H=G_1uwG_2$} 
\centering 
\begin{tabular}{|c|c|c|c|c|c|c|c|} 
\hline\hline 
\small  $u$ in $G_1$ & \small $w$ in $G_2$ & \small $u$ in $H$ & \small $w$ in $H$ & \small $\Delta\nu$ & \small $N(G_1)\mathbf{s_1}$ & \small $N(G_2)\mathbf{s_2}$ & \small {when}
\\ [0.5ex]
\hline 
$\mathcal{N}$&$\mathcal{N}$&$\mathcal{N}$&$\mathcal{N}$& $0$ & $\mathbf{1}$ & $\mathbf{1}$ &    \\[0ex]
\hline
$\mathcal{N}$&$\mathcal{A}$&$\mathcal{N}$&$\mathcal{A}$& $0$ & $\overline{\mathbf{c}_u}$ & $\mathbf{1}$ &     \\[0ex]
\hline
\multirow{2}{*}{$\mathcal{N}$} &\multirow{2}{*}{$\mathcal{H}$} & \multirow{2}{*}{$\mathcal{N}$} & \multirow{2}{*}{$\mathcal{H}$} & \multirow{2}{*}{$0$}  &  $\mathbf{1}$ & $\mathbf{1}$ & \small  $\mathbf{s}(w)=0$   \\
 \cline{6-8}
 & &  &  & &  $\overline{\mathbf{c}_u}$ & $\mathbf{1}$&  \small  $\mathbf{s}(w)=1$  \\
\hline
\multirow{2}{*}{$\mathcal{A}$} &\multirow{2}{*}{$\mathcal{A}$} & \multirow{2}{*}{$\mathcal{H}$} & \multirow{2}{*}{$\mathcal{H}$} & \multirow{2}{*}{$1$}  &  $\overline{\mathbf{c}_u}$ & $\mathbf{1}$ & \small  $\mathbf{s}(u)=0$, $\mathbf{s}(w)=1$  \\
 \cline{6-8}
 & &   &   &  & $\mathbf{1}$ & $\overline{\mathbf{c}_w}$ &  \small  $\mathbf{s}(u)=1$, $\mathbf{s}(w)=0$  \\[0ex]
\hline
$\mathcal{A}$ &$\mathcal{H}$&$\mathcal{N}$&$\mathcal{A}$& $-1$ & $\overline{\mathbf{c}_u}$ & $\mathbf{1}$ &    \\[0ex]
\hline
$\mathcal{H}$&$\mathcal{H}$&$\mathcal{N}$&$\mathcal{N}$& $-2$ & $\mathbf{1}$  & $\mathbf{1}$ &    \\[0ex]
\hline
\end{tabular}
$\mathcal{N}$: never-activated, $\mathcal{A}$: always-activated, $\mathcal{H}$: half-activated $\;\;\;\;\;\;\;\;\;\;\;\;\;$
\label{table1}
\end{table}
\\
where the not written cases can be obtained by symmetry.

\end{thm}
\begin{proof}
 Let $\mathbf{s}$ be a solving pattern for the all-ones configuration on $H$. Note that by Lemma \ref{cutedge} there are three cases we need to consider:

$\text{\emph{Case a}}:\; \mathbf{s}(u)=0,\; \mathbf{s}(w)=0$

$\text{\emph{Case b}}:\; \mathbf{s}(u)=0,\; \mathbf{s}(w)=1$

$\text{\emph{Case c}}:\; \mathbf{s}(u)=1,\; \mathbf{s}(w)=0.$

We will investigate each case under a specific Type-$(a_1,a_2)$ connection of $G_1$ and $G_2$. By symmetry we need to consider total of six types of connection:

Type-$(\mathcal{N},\mathcal{N})$ where $u$ is never-activated in $G_1$ and $w$ is never-activated in $G_2$: \emph{Case b} and \emph{Case c} are not possible. Indeed, for example, if  $\mathbf{s}(u)=1, \mathbf{s}(w)=0$ then $N(G_1)\mathbf{s_1}=\mathbf{1}$ with $\mathbf{s_1}(u)=1$, which contradicts with the fact that $u$ is a never-activated vertex of $G_1$. \emph{Case a} holds if and only if $N(G_1)\mathbf{s_1}=\mathbf{1}$, $N(G_2)\mathbf{s_2}=\mathbf{1}$. So the number of solutions is $2^{\nu(H)}=2^{\nu(G_1)}2^{\nu(G_2)}$ and  $u$ and $w$ are never-activated in $H$.

 Type-$(\mathcal{N},\mathcal{A})$ where $u$ is never-activated in $G_1$ and $w$ is always-activated in $G_2$: \emph{Case a} is not possible. Otherwise, $N(G_2)\mathbf{s_2}=\mathbf{1}$ with $\mathbf{s_2}(w)=0$, which contradicts with always-activatedness of $w$. Case $c$ is not possible because of the same reason in Type-$(\mathcal{N},\mathcal{N})$ connection. \emph{Case b} holds if and only if $N(G_1)\mathbf{s_1}=\overline{\mathbf{c}_u}$, $N(G_2)\mathbf{s_2}=\mathbf{1}$. So the number of solutions is $2^{\nu(H)}=2^{\nu(G_1)}2^{\nu(G_2)}$,  $u$ is never-activated in $H$ and $w$ is always-activated in $H$.

 Type-$(\mathcal{N},\mathcal{H})$ where $u$ is never-activated in $G_1$ and $w$ is half-activated in $G_2$: \emph{Case c} is not possible. Otherwise, $N(G_2)\mathbf{s_2}=\overline{\mathbf{c}_w}$, which implies $\mathbf{c}_w=\overline{\mathbf{c}_w}+\mathbf{1}$ is solvable, which contradicts with the half-activatedness of $w$ [\cite{Ballard19}, Proposition 2.4]. \emph{Case a} holds if and only if $N(G_1)\mathbf{s_1}=\mathbf{1}$, $N(G_2)\mathbf{s_2}=\mathbf{1}$ with $\mathbf{s_2}(w)=0$. Since $w$ is not activated in exactly half of the solutions for any configuration on $G_2$,  there are $2^{\nu(G_1)}2^{\nu(G_2)-1}$ solutions of the all-ones configuration on $H$ in \emph{Case a} . And \emph{Case b} holds if and only if $N(G_1)\mathbf{s_1}=\overline{\mathbf{c}_u}$, $N(G_2)\mathbf{s_2}=1$ with $\mathbf{s_2}(w)=1$ which corresponds $2^{\nu(G_1)}2^{\nu(G_2)-1}$ solutions of the all-ones configuration on $H$. In total, we have $2^{\nu(H)}=2^{\nu(G_1)}2^{\nu(G_2)}$ solutions in which $u$ is never-activated and $w$ is half-activated.

Type-$(\mathcal{A},\mathcal{A})$ where $u$ is always-activated in $G_1$ and $w$ is always-activated in $G_2$: \emph{Case a} is not possible. Otherwise, $N(G_1)\mathbf{s_1}=\mathbf{1}$ with $\mathbf{s_1}(u)=0$, which contradicts with always-activatedness of $u$. \emph{Case b} holds if and only if $N(G_1)\mathbf{s_1}=\overline{\mathbf{c}_u}$ and $N(G_2)\mathbf{s_2}=\mathbf{1}$. Hence, there are $2^{\nu(G_1)}2^{\nu(G_2)}$ solutions $\mathbf{s}$ on $H$, which satisfies \emph{Case b}. \emph{Case c} holds if and only if $N(G_1)\mathbf{s_1}=\mathbf{1}$, $N(G_2)\mathbf{s_2}=\overline{\mathbf{c}_w}$, which again corresponds $2^{\nu(G_1)}2^{\nu(G_2)}$ solutions $\mathbf{s}$ on $H$. In total, there are $2^{\nu(H)}=2^{\nu(G_1)+\nu(G_2)+1}$ solutions. Moreover, each vertex is activated only in one of the cases. Hence, $u$ and $w$ are half-activated in $H$

Type-$(\mathcal{A},\mathcal{H})$ where $u$ is always-activated in $G_1$ and $w$ is half-activated in $G_2$: \emph{Case a} and \emph{Case c} are not possible because of the same reason in Type-$(\mathcal{A},\mathcal{A})$ and Type-$(\mathcal{N},\mathcal{H})$ connections, respectively. \emph{Case b} holds if and only if $N(G_1)\mathbf{s_1}=\overline{\mathbf{c}_u}$ and $N(G_2)\mathbf{s_2}=\mathbf{1}$ with  $\mathbf{s_2}(w)=1$. Since $w$ is not activated in exactly half of the solutions for any configuration on $G_2$,  there are $2^{\nu(H)}=2^{\nu(G_1)}2^{\nu(G_2)-1}$ solutions of the all-ones configuration on $H$. Moreover, $u$ is never-activated and $w$ is always-activated in $H$.

Type-$(\mathcal{H},\mathcal{H})$ where $u$ is half-activated in $G_1$ and $w$ is half-activated in $G_2$:: \emph{Case c} is not possible because of the same reason in Type-$(\mathcal{N},\mathcal{H})$ connection. \emph{Case b} corresponds to the symmetric situation of \emph{Case c},  where $u$ and $w$ are switched. So it is not possible either. \emph{Case a} holds if and only if $N(G_1)\mathbf{s_1}=\mathbf{1}$ with $\mathbf{s_1}(u)=0$. Since  $u$ is half-activated there are $2^{\nu(G_1)-1}$  such solutions. Similarly,  $N(G_2)\mathbf{s_2}=\mathbf{1}$ with $\mathbf{s_2}(w)=0$ with $2^{\nu(G_2)-1}$ such solutions. In total, there are $2^{\nu(H)}=2^{\nu(G_1)+\nu(G_2)-2}$ solutions of the all-ones configuration on $H$. Moreover  $u$ and $w$ are never-activated in $H$.
\end{proof}

\begin{rem}
Let us assign a number $A(v)=A_G(v)$ to every vertex $v\in V(G)$, which we call activation number of $v$ in $G$ as follows. We say $A(v)=1$ if $v$ is always-activated,  $A(v)=0$ if $v$ is never-activated and $A(v)=-1$ if $v$ is half-activated. Then some of the information in the above theorem can be expressed more compactly as

\begin{align}
\Delta\nu= \left\{ \begin{array}{cc}
                -2 &  \;\;\text{if}\;\;\;\; A_{G_1}(u)=A_{G_2}(w)=-1\\
                A_{G_1}(u)A_{G_2}(w) &  \text{otherwise}  \\
                \end{array} \right\},
\end{align}

\begin{equation}
\label{Aa}
A_{H}(u)= A_{G_1}(u)(1+A_{G_2}(w))\mod 3,
\end{equation}

\begin{equation}
\label{Ab}
A_{H}(w)= A_{G_2}(w)(1+A_{G_1}(u))\mod 3.
\end{equation}
\end{rem}

We want to note that the fifth column of Table \ref{table1} is also deducible from [\cite{Ballard19}, Theorem 2.18] using the relation between activation numbers and null differences. We have the following corrolary.

\begin{cor}
Let $F$,$G_1$,...,$G_n$ be always solvable graphs. Let $u\in V(F)$, and $v_i \in V(G_i)$ for $1\leq i\leq n$. Consider the join graph $H$ obtained by joining $v_i$ to $u$ for $1\leq i\leq n$ by an edge. If $u$ is never-activated in $F$, then $H$ is always solvable. If $u$ is always-activated in $F$, then $H$ is always solvable if and only if $v_i$ is always-activated in $G_i$ for even number of vertices.
\end{cor}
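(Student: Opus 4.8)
The plan is to realize $H$ as the last term of a chain of single-edge joins and apply Theorem~\ref{main} repeatedly. I set $H_0 := F$ and, for $1 \le i \le n$, let $H_i := H_{i-1}\,u\,v_i\,G_i$ be the join obtained by adding the single edge $(u,v_i)$; since each $G_i$ is attached to the rest of the construction only through $u$, this new edge is a bridge, and $H_n = H$ is built by $n$ legitimate applications of the join construction of Section~3. At the $i$-th step I identify the pair $(G_1,G_2)$ of Theorem~\ref{main} with $(H_{i-1},G_i)$ and the distinguished vertices $(u,w)$ with $(u,v_i)$, and I track the two quantities $\mathcal{A}_{H_i}(u)$ and $\nu(H_i)$ as $i$ grows.

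First I would record what always solvability of the pieces buys us. Since $\nu(F)=\nu(G_i)=0$, no vertex of $F$ or of any $G_i$ is half-activated, so $\mathcal{A}_F(u),\mathcal{A}_{G_i}(v_i)\in\{0,1\}$. In particular the exceptional case of the $\Delta\nu$ formula (the value $-2$, which requires $\mathcal{A}_{G_i}(v_i)=-1$) never occurs, so Theorem~\ref{main} gives
\[
\nu(H_i)-\nu(H_{i-1})=\mathcal{A}_{H_{i-1}}(u)\,\mathcal{A}_{G_i}(v_i),\qquad
\mathcal{A}_{H_i}(u)=\mathcal{A}_{H_{i-1}}(u)\bigl(1+\mathcal{A}_{G_i}(v_i)\bigr)\bmod 3 .
\]
Evaluating these at $\mathcal{A}_{G_i}(v_i)=0$ (equivalently, reading the three rows of the table whose second entry is $0$, including the symmetric ones) shows that attaching a $G_i$ with $\mathcal{A}_{G_i}(v_i)=0$ is inert: it changes neither $\nu$ nor $\mathcal{A}(u)$. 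Hence such attachments may be performed last and contribute nothing, so I may assume that every attachment I actually analyze satisfies $\mathcal{A}_{G_i}(v_i)=1$; write $k:=|\{\,i:\mathcal{A}_{G_i}(v_i)=1\,\}|$ for their number.

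If $\mathcal{A}_F(u)=0$, an immediate induction from $\mathcal{A}_{H_0}(u)=0$, $\nu(H_0)=0$ gives $\mathcal{A}_{H_i}(u)=0$ and $\nu(H_i)=\nu(H_{i-1})$ for all $i$, whence $\nu(H)=\nu(F)=0$ and $H$ is always solvable, regardless of $k$. If $\mathcal{A}_F(u)=1$, I start from $\mathcal{A}_{H_0}(u)=1$, $\nu(H_0)=0$ and follow the two relevant transitions: a Type-$(1,1)$ connection sends $\mathcal{A}(u)=1\mapsto -1$ and raises $\nu$ by $1$, while the symmetric Type-$(-1,1)$ connection sends $\mathcal{A}(u)=-1\mapsto 1$ and lowers $\nu$ by $1$; both are also visible directly from the displayed congruence \eqref{Aa} and the $\Delta\nu$ formula. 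Thus $\mathcal{A}_{H_i}(u)$ cycles $1,-1,1,-1,\dots$ with period two while $\nu(H_i)$ toggles $0,1,0,1,\dots$, so after $k$ such steps $\nu(H)=0$ precisely when $k$ is even. This is the asserted equivalence.

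The point needing the most care is the bookkeeping for the intermediate graphs: after the first Type-$(1,1)$ join the graph $H_{i-1}$ is no longer always solvable ($\nu=1$) and $u$ has become half-activated, so I must use the full three-valued activation calculus of Theorem~\ref{main} rather than the two-valued one available for always solvable graphs. The single subtle transition is Type-$(-1,1)$, which appears in the table only implicitly, as the symmetric image of Type-$(1,-1)$; I would confirm its effect against \eqref{Aa}, \eqref{Ab} and the $\Delta\nu$ formula. Finally I would remark that, although the intermediate values depend on the order of attachment, the terminal graph $H_n=H$ and hence $\nu(H)$ do not, which is exactly what legitimizes deferring the inert attachments and reducing to the $k$ attachments with $\mathcal{A}_{G_i}(v_i)=1$.
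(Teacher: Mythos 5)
Your proposal is correct and follows essentially the same route as the paper: realize $H$ by successive single-edge joins at $u$, apply Theorem \ref{main} at each step, note that attachments with $\mathcal{A}_{G_i}(v_i)=0$ are inert, and observe that attachments with $\mathcal{A}_{G_i}(v_i)=1$ alternate $\mathcal{A}(u)$ between $1$ and $-1$ (Type-$(1,1)$ followed by Type-$(-1,1)$) while toggling $\nu$ between $0$ and $1$, so $\nu(H)=0$ iff their number $k$ is even. Your version merely makes explicit the bookkeeping (including the symmetric Type-$(-1,1)$ row and the impossibility of the $\Delta\nu=-2$ case) that the paper's terser proof leaves implicit.
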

\begin{proof}
Note that $H$ can be obtained by applying Type-$(\cdot,\cdot)$ connections successively to the graphs  $F$,$G_1$,...,$G_n$, where at each connection one of the vertices is always taken as $u$. The first part of the corollary follows from the fact that Type-$(\mathcal{N},\cdot)$ connections never change the activation types of the vertices nor $\Delta\nu$. On the other hand, single Type-$(\mathcal{A},\mathcal{A})$ connection increases $\Delta\nu$ by $1$. However, joining even number of $v_i$ to $u$ corresponds a series of Type-$(\mathcal{A},\mathcal{A})$ connection followed by a Type-$(\mathcal{H},\mathcal{A})$ connection, which remains $\Delta\nu$ unchanged.

\end{proof}

\section{Always solvable trees}

In this section, we use the following observation: Since every edge $e$ of a tree $T$ is a cut edge, $T$ can always be seen as a join graph of the connected components of $T-e$. As a result, every adjacent pair of vertices $u$ and $w$  can only have the pair of activation types
determined by the third and fourth column of Table \ref{table1}. And the possible values of their activation types with respect to the connected components of $T-(u,w)$ can be one of those corresponding pairs in the first and second column of Table \ref{table1}.

From [\cite{Amin92}, Lemma 2], we know that the number of half-activated vertices of a graph is even. Moreover, proof of [\cite{Amin92}, Lemma 2] also implies the following.
\begin{lem}
\label{adj}
Let $u$ be a half-activated vertex of a graph. Then there exists a half-activated vertex adjacent to $u$.
\end{lem}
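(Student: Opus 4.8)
The plan is to exploit the linear-algebraic characterization of half-activated vertices together with the all-ones structure. Recall from the excerpt that $v$ is half-activated precisely when $\mathbf{c}_v$ is \emph{not} solvable, equivalently when $\mathbf{c}_v\notin Col(N)=Row(N)$, equivalently when $\mathbf{c}_v$ fails to be orthogonal to some null-pattern by \emph{(R 3)}. Thus $u$ is half-activated iff there is a null-pattern $\boldsymbol{\ell}$ with $\boldsymbol{\ell}(u)=1$, which is just the definition but now read through orthogonality: $\mathbf{c}_u \cdot \boldsymbol{\ell} = \boldsymbol{\ell}(u) = 1$ for some $\boldsymbol{\ell}\in Ker(N)$.

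First I would fix such a null-pattern $\boldsymbol{\ell}$ with $\boldsymbol{\ell}(u)=1$, so that $N\boldsymbol{\ell}=\mathbf{0}$. The key identity to extract is what $N\boldsymbol{\ell}=\mathbf{0}$ says at the coordinate $u$: the $u$-th entry of $N\boldsymbol{\ell}$ is $\sum_{x\in N[u]}\boldsymbol{\ell}(x)=0$ over $\mathbb{Z}_2$. Since $u\in N[u]$ and $\boldsymbol{\ell}(u)=1$, this forces $\sum_{x\in N(u)}\boldsymbol{\ell}(x)=1$, i.e.\ the number of neighbors $x$ of $u$ with $\boldsymbol{\ell}(x)=1$ is \emph{odd}, and in particular nonzero. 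Choosing any such neighbor $w$ gives $\boldsymbol{\ell}(w)=1$, and since $\boldsymbol{\ell}$ is a null-pattern witnessing $\boldsymbol{\ell}(w)=1$, the vertex $w$ is half-activated by definition. As $w$ is adjacent to $u$, this is exactly the claim. This is essentially the parity argument underlying [\cite{Amin92}, Lemma 2] applied to the single coordinate $u$ rather than summed over all vertices.

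The one point requiring care is the passage from ``$N\boldsymbol{\ell}=\mathbf{0}$'' to the coordinate-wise parity statement $\sum_{x\in N[u]}\boldsymbol{\ell}(x)=0$: this is precisely the statement that the $u$-row of $N$ dotted with $\boldsymbol{\ell}$ vanishes, and the $u$-row of $N$ is the characteristic vector of $N[u]$ because $N$ is symmetric (so rows equal columns), so the dot product counts, modulo $2$, the members of $N[u]$ on which $\boldsymbol{\ell}$ is supported. I would state this explicitly to make the parity bookkeeping airtight. I do not anticipate a serious obstacle; the whole argument is a two-line local computation at the vertex $u$ once the orthogonality reformulation of half-activation is in hand.

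Alternatively, one could cite the proof of [\cite{Amin92}, Lemma 2] directly, since the hypothesis of the lemma states that its proof already yields this adjacency conclusion. But I prefer the self-contained coordinate argument above, as it makes transparent \emph{why} a half-activated vertex must have a half-activated neighbor: the null-pattern that half-activates $u$ must, by the closed-neighborhood parity constraint at $u$, also be supported on an odd (hence positive) number of neighbors of $u$, each of which it thereby half-activates.
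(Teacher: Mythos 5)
Your proof is correct and is essentially identical to the paper's own argument: both evaluate the null-pattern condition $N\boldsymbol{\ell}=\mathbf{0}$ at the coordinate $u$, note that the $u$-row of $N$ is the characteristic vector of $N[u]$, and conclude from $\boldsymbol{\ell}(u)=1$ that $\boldsymbol{\ell}$ must be supported on an odd, hence nonzero, number of neighbors of $u$, any one of which is half-activated by definition. Your explicit remark about using symmetry of $N$ to identify rows with columns is a fair point of care, but it matches the paper's computation $\mathbf{N}[u]\cdot\boldsymbol{\ell}=(N(G)\boldsymbol{\ell})(u)=0$ exactly.
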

\begin{proof}
By definition, $u$ is a half-activated vertex of a graph $G$ if  $\boldsymbol{\ell}(u)=1$ for some null pattern $\boldsymbol{\ell}$. Let us denote the characteristic vector of the closed neighborhood set of $u$ by $\mathbf{N}[u]$. Since $\boldsymbol{\ell}$ is a null pattern $\mathbf{N}[u]\cdot\boldsymbol{\ell}=(N(G)\boldsymbol{\ell})(u)=\mathbf{0}(u)=0$. Since $\boldsymbol{\ell}(u)=1$, this implies $\boldsymbol{\ell}$ activates odd number of vertices adjacent to $u$. So there exists at least one vertex $w$ adjacent to $u$ such that  $\boldsymbol{\ell}(w)=1$. Hence, $w$ is half-activated.
\end{proof}
\begin{defn}
Let $G$ be a nonempty graph and $P=\{G_1,...,G_k\}$ be a set of subgraphs of $G$.  We say $G$ is partitioned into $P$ (or $P$ partitions $G$) if $V(G_i)$'s are pairwise disjoint and $V(G)=\bigcup_{i=1}^k V(G_i)$. If all $G_i$'s are connected and always solvable, we say $P$ is a partition into always solvable subgraphs (\emph{PASS}) of $G$.  We say a \emph{PASS} $M$ is minimal if for all \emph{PASS}'s $P$ of $G$, $|M|\leq |P|.$  We define the number $\pi(G)$ as the cardinality of a minimal \emph{PASS} of $G$.
\end{defn}
\begin{rem}
Note that since $K_1$ is always solvable, every graph $G$ has an at least one PASS, which consists of single vertex subgraphs of $G$. Consequently, every graph has a minimal PASS and each minimal PASS has the same cardinality. Hence, $\pi(G)$ is well defined for all graphs $G$.
\end{rem}
\begin{thm}
\label{partition}
For a nonempty tree $T$, $\pi(T)=\nu(T)+1$. In other words, $T$ has nullity $n$ if and only if there exists a minimal PASS of $T$ with cardinality $n+1$.
\end{thm}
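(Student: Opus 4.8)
The plan is to prove $\pi(T) = \nu(T) + 1$ by establishing the two inequalities $\pi(T) \leq \nu(T) + 1$ and $\pi(T) \geq \nu(T) + 1$ separately, using Theorem~\ref{main} and Lemma~\ref{adj} as the main tools. The key conceptual idea is to track how nullity changes when we build a tree by joining always solvable pieces, and conversely how a minimal PASS interacts with the half-activated vertices of $T$.

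For the upper bound $\pi(T) \leq \nu(T)+1$, I would proceed by induction on $\nu(T)$. If $\nu(T) = 0$, then $T$ is itself always solvable and $\{T\}$ is a PASS of cardinality $1 = \nu(T)+1$. For the inductive step, suppose $\nu(T) = m > 0$. By Lemma~\ref{adj} there exist two adjacent half-activated vertices $u,w$ in $T$; the edge $e = (u,w)$ is a cut edge, so $T = T_1 u w T_2$ where $T_1, T_2$ are the two components of $T-e$. By the Type-$(-1,-1)$ row of Table~\ref{table1} we have $\Delta\nu = -2$, i.e.\ $\nu(T_1) + \nu(T_2) = \nu(T) - 2 = m-2$. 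This is the crucial place where I must confirm that $u,w$ being half-activated \emph{in $T$} forces them to be half-activated \emph{in their respective components}; this follows from reading Table~\ref{table1} backwards, since $\mathcal{A}_T(u) = \mathcal{A}_T(w) = -1$ occurs only in the Type-$(-1,-1)$ row. Applying the induction hypothesis to $T_1$ and $T_2$ gives PASS's of sizes $\nu(T_1)+1$ and $\nu(T_2)+1$; their union is a PASS of $T$ of size $\nu(T_1)+\nu(T_2)+2 = m = \nu(T)+1$ \emph{wait}, this gives $m$, not $m+1$, so I must recheck: $\nu(T_1)+1+\nu(T_2)+1 = (m-2)+2 = m = \nu(T)$, which is even smaller, so in fact the cut at two half-activated vertices over-reduces. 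The correct move is to cut at a \emph{single} half-activated vertex pairing that drops $\nu$ by exactly one, namely use the structure so that $\pi$ increases by exactly one each time $\nu$ drops by one.

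Concretely, I would instead use Theorem~\ref{chain1}: removing a half-activated vertex drops $\nu$ by one. Combined with the join analysis, the cleaner induction cuts $T$ at a pendant-type or carefully chosen edge so that one side is always solvable (contributing a single PASS block) and $\nu$ decreases by the right amount. For the lower bound $\pi(T) \geq \nu(T)+1$, the idea is that any PASS $\{T_1,\dots,T_k\}$ reassembles $T$ by $k-1$ join operations along the edges of $T$ connecting the blocks; each such Type-$(a,b)$ connection changes $\nu$ by $\Delta\nu \in \{-2,-1,0,1\}$, and since each $\nu(T_i)=0$, we get $\nu(T) = \sum_{\text{joins}} \Delta\nu \leq (k-1)\cdot 1 = k-1$, hence $k \geq \nu(T)+1$. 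The upper bound is obtained by exhibiting a PASS achieving equality, which I build by repeatedly splitting off always solvable subtrees at the half-activated vertices guaranteed by Lemma~\ref{adj}, so that every join in the reassembly is of Type-$(1,1)$ contributing $\Delta\nu = +1$.

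The main obstacle I anticipate is the bookkeeping in the lower-bound argument: when a PASS reassembles $T$, the intermediate graphs are not the original always solvable blocks but partially joined subtrees, so the activation numbers at the joining vertices evolve according to equations~\eqref{Aa} and~\eqref{Ab}, and I must verify that $\Delta\nu \leq 1$ holds at \emph{every} step regardless of this evolution. Reading Table~\ref{table1}, $\Delta\nu = 1$ occurs only in the Type-$(1,1)$ row and never exceeds $1$, so the bound $\nu(T)\leq k-1$ is safe; the subtlety is only to ensure the telescoping sum of $\Delta\nu$ over the join tree is valid, which holds because $\nu$ is additive-with-correction across cut edges by definition of $\Delta\nu$. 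Establishing equality then reduces to showing a minimal PASS can be arranged so all joins are Type-$(1,1)$, which is exactly where Lemma~\ref{adj} (adjacent half-activated vertices always exist while $\nu > 0$) supplies the needed splitting points.
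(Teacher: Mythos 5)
Your lower-bound argument is correct and coincides with the paper's: a PASS of a tree with $k$ blocks is reassembled by exactly $k-1$ single-edge joins, each join satisfies $\Delta\nu\leq 1$ by Table~\ref{table1}, and the blocks start at nullity zero, so the telescoping sum gives $\nu(T)\leq k-1$. The genuine gap is in your upper bound, and it comes from a concrete misreading of Table~\ref{table1}. The Type-$(a,b)$ labels record the activation numbers of $u$ and $w$ in the \emph{components} $G_1,G_2$ (columns one and two); the activation numbers in the joined graph $H$ are in columns three and four. The pair $\mathcal{A}_H(u)=\mathcal{A}_H(w)=-1$ appears only in the Type-$(1,1)$ row; in the Type-$(-1,-1)$ row the resulting pair is $(0,0)$. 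So when Lemma~\ref{adj} hands you adjacent half-activated vertices $x,y$ of $T$, cutting the edge $(x,y)$ exhibits $T$ as a Type-$(1,1)$ connection: $x$ and $y$ are \emph{always-activated} in their components $X,Y$, and $\Delta\nu=+1$, hence $\nu(X)+\nu(Y)=\nu(T)-\Delta\nu=m-1$. (Your arithmetic was also internally inconsistent: under your own hypothesis $\Delta\nu=-2$ one gets $\nu(T_1)+\nu(T_2)=\nu(T)-\Delta\nu=m+2$, not $m-2$.) With the correct row, the induction you set up in your \emph{first} attempt closes immediately: the inductive PASS's of $X$ and $Y$ have sizes $\nu(X)+1$ and $\nu(Y)+1$, and their union is a PASS of $T$ of size $(m-1)+2=m+1=\nu(T)+1$. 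This is precisely the paper's proof.

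Because of the misreading you abandoned that induction and substituted an unproven sketch (``cut at a pendant-type or carefully chosen edge so that one side is always solvable and $\nu$ decreases by the right amount,'' invoking Theorem~\ref{chain1}); this is never carried out, and there is no a priori guarantee that such an edge exists, since Theorem~\ref{chain1} deletes a \emph{vertex}, which need not split a tree into a PASS-compatible piece plus a remainder of smaller nullity. Your closing remark --- split repeatedly at adjacent half-activated vertices so that every join in the reassembly is Type-$(1,1)$ with $\Delta\nu=+1$ --- is exactly the right mechanism, but as written your proposal simultaneously asserts the wrong table row for that very split and leaves the induction incomplete, so the upper-bound half does not stand as a proof until the table is read correctly.
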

\begin{proof}
We first show $\pi(T)\leq\nu(T)+1$, which is equivalent to prove that there exists a \emph{PASS} of $T$ with cardinality $\nu(T)+1$. We prove the claim by applying induction on the nullity of the tree. Taking the partition as $T$ itself, we see that the claim trivially holds true for $\nu(T)=0$. Assume that it holds true for all
 trees $S$ with $\nu(S)<n$. If $T$ is a tree with $\nu(T)=n$, then it has a half-activated vertex. Moreover, by Lemma \ref{adj} there exists an adjacent pair of half-activated vertices $x$ and $y$ of $T$. Let $X$ and $Y$ be the components of $T-(x,y)$ containing $x$, and $y$, respectively.
From Table \ref{table1} we see $T$ must be a Type-$(1,1)$ connection of $X$ and $Y$, and $\nu(X)+\nu(Y)=\nu(T)-\Delta\nu=n-1$. So both $\nu(X)$ and $\nu(Y)$ are less than $n$. By induction hypothesis there exist \emph{PASS}'s $\{X_1,...,X_r\}$ and $\{Y_1,...,Y_s\}$ of $X$ and $Y$, respectively with $r=\nu(X)+1$ and $s=\nu(Y)+1$. Note that since $X$ and $Y$ are disjoint, $X_i$ and $Y_j$ are disjoint as well for all $1\leq i\leq r$, $1\leq j\leq s$. Moreover, $V(T)=V(X)\cup V(Y)$. Thus $\{X_1,...,X_r,Y_1,...,Y_s\}$ is a \emph{PASS} of $T$ with cardinality $r+s=\nu(X)+1+\nu(Y)+1=n-1+2=n+1$. This proves $\pi(T)\leq\nu(T)+1$.

To prove the converse inequality let $M=\{T_1,...,T_m\}$ be a minimal \emph{PASS} of $T$ with $m=\pi(T)$.
Since $T$ is a tree, one can easily observe that there cannot be more than one edge between different subtrees $T_i$ and $T_j$ in $T$. Moreover, each subtree $T_i$ is connected by an edge to at least one other subtree $T_j$; and every edge between subtrees is a cut edge. These observations allow us to realize that the number of edges between subtrees $T_i$'s equals to $m-1$, and $T$ can be seen as a join graph obtained by applying $m-1$ successive Type-$(\cdot,\cdot)$ connections on $T_i$'s. Since initially all subgraphs has nullity $0$, and since the nullity of any join graph obtained by a single Type-$(\cdot,\cdot)$ connection exceeds the sum of the nullities of joined components at most by $1$, we conclude that $\nu(T)$ can at most be $m-1=\pi(T)-1$. Thus, $\nu(T)+1 \leq \pi(T) $.
\end{proof}

\begin{rem}
Note that the above identity cannot be generalized to arbitrary graphs. Indeed, if we consider the cycle $C_6$, we see that it has a minimal PASS, which consists of two subgraphs isomorphic to $P_3$. Thus $\pi(C_6)=2$. However, $\nu(C_6)=2$ as well.
\end{rem}

Let $S_m$ denote the star tree which is the tree with order $m$ with maximum diameter $2$. In other words, $S_1$ is the single vertex graph $K_1$ and $S_m$ is the complete bipartite graph $K_{1,m-1}$ for $m> 1$. We have the following main result.
\begin{thm}
\label{chr2}
Let $T$ be a tree different than $S_m$, $m\in \mathbb{N}$. Then, $T$ is always solvable if and only if $T$ is a Type-$(\mathcal{N},\mathcal{A})$ or Type-$(\mathcal{N},\mathcal{N})$ connection of two always solvable trees.
\end{thm}
\begin{proof}
($\Rightarrow$) Assume for a contradiction that $T$ is an always solvable tree different than $S_m$ but it is not a Type-$(\mathcal{N},\mathcal{A})$ or Type-$(\mathcal{N},\mathcal{N})$ connection of always solvable trees. Since $T$ is always solvable it does not have any half-activated vertex. So every vertex is either always or never-activated. Since $T$ is not $S_1$ and two always activated vertex cannot be adjacent in a tree \cite{Conlon99}, $T$ has a never activated vertex $u$.  Let $\{v_1,...,v_n\}$ be an enumeration of neighbors of $u$. Furthermore, for each $i\in \{1,...,n\}$, let $U_i$ and $T_i$ be the connected components of $T-(u,v_i)$ containing $u$ and $v_i$, respectively.

First assume that $v_i$ is always-activated in $T$. Since $T$ is not a Type-$(\mathcal{N},\mathcal{A})$ connection of always solvable trees, we see from Table \ref{table1} that $T$ is a Type-$(\mathcal{A},\mathcal{H})$ connection of $U_i$ and $T_i$, respectively. Moreover, $v_i$ is half activated in $T_i$. So $\nu(T_i) \geq 1$. On the other hand, $\nu(U_i)+\nu(T_i)=\nu(T)-\Delta \nu=0-(-1)=1.$ Hence $\nu(T_i)=1$.

Second assume that $v_i$ is never-activated in $T$. Since $T$ is not a Type-$(\mathcal{N},\mathcal{N})$ connection of always solvable trees, we see from Table \ref{table1} that $T$ is a Type-$(\mathcal{H},\mathcal{H})$ connection of $U_i$ and $T_i$, respectively. Moreover, $u$ and $v_i$ are half activated in $U_i$ and $T_i$, respectively. So $\nu(U_i) \geq 1$ and $\nu(T_i) \geq 1$. On the other hand, $\nu(U_i)+\nu(T_i)=\nu(T)-\Delta \nu=0-(-2)=2.$ Hence $\nu(U_i)=\nu(T_i)=1$.

So in both cases we see that $\nu(T_i)=1$ and $v_i$ is half-activated in $T_i$.

Now define the trees $R_k$ recursively as follows. Let $R_1= K_1uv_1T_1$ and $R_k=R_{k-1}uv_kT_k$ for $k\geq 2$. Since $u$ is always activated in the single vertex graph $K_1$ and $v_1$ is half-activated in $T_1$, $R_1$ is a Type-$(\mathcal{A},\mathcal{H})$ connection of $K_1$ and $T_1$, respectively. Hence, $\nu(R_1)=\nu(K_1)+\nu(T_1)+\Delta\nu=0+1-1=0$. Moreover $u$ is never-activated in $R_1$. Then $R_2$ is a Type-$(\mathcal{N},\mathcal{H})$ connection of $R_1$ and $T_2$. Hence $\nu(R_2)=\nu(R_1)+\nu(T_2)+\Delta\nu=0+1-0=1$ and $u$ is never-activated in $R_2$. Continuing this way we see that for all $k \geq 2$, $R_k$ is a Type-$(\mathcal{N},\mathcal{H})$ connection of $R_{k-1}$ and $T_k$ and $\nu(R_{k})=k-1$. Note that $R_n=T$. So $n-1=\nu(R_n)=\nu(T)=0,$ which gives $n=1$.

This proves that every never-activated vertex of $T$ is a leaf. Together with the fact that two always activated vertex cannot be adjacent in a tree, this implies $T$ is the star tree $S_m$ for some $m>1$, which contradicts with our assumption.

($\Leftarrow$) From Table \ref{table1} we see that in both types of connection $\Delta\nu=0$. Hence the resulting tree is always solvable.
\end{proof}

\section{Trees with adjacent pair of never activated vertices}

In this section we investigate the trees with adjacent pair of never activated vertices.

\begin{thm}
\label{thmP}
Let $T$ be a tree with exactly two even degree vertices and $P$ be the path between these vertices with the vertex sequence $(v_1,...,v_n)$. Then, if the length of $P$ is odd, we have $\nu(T)=0$ and $v_i$ is never-activated
for all $i$. If the length of $P$ is even, then we have $\nu(T)=1$ and $v_i$ is never-activated if $i$ is even, half-activated if $i$ is odd.

\end{thm}

\begin{proof}

Let us first assume that the length of $P$ is $1$. Let $e=(v_1,v_2)$ and $T_1$, $T_2$ be the connected components of $T-e$ containing $v_1$ and $v_2$, respectively. Then $T_1$ and $T_2$ are odd trees (Every vertex of $T_1$ and $T_2$ has odd degree). By [\cite{Amin98}, Theorem 3] $\nu(T_1)=\nu(T_2)=1.$ Moreover, $\mathbf{1}$ is a null pattern of both $T_1$ and $T_2$ (Here  we use the same notation $\mathbf{1}$ to denote the all-ones patterns of different graphs). This implies $v_1$ and $v_2$ are half-activated in $T_1$ and $T_2$, respectively.  Note that $T=T_1v_1v_2T_2$. Hence, by Table \ref{table1}, $\nu(T)=\nu(T_1)+\nu(T_2)+\Delta \nu=1+1-2=0$ and $v_1$ and $v_2$ are never-activated in $T$.

Second assume that the length of $P$ is $2$. Then $P$ has the vertex sequence $(v_1,v_2,v_3)$. Let  $e=(v_1,v_2)$ and $T_1$, $T_2$ be the connected components of $T-e$ containing $v_1$ and $v_2$, respectively. Then $T_1$ is an odd tree. Hence, $\nu(T_1)=1$ by [\cite{Amin98}, Theorem 3], and $v_1$ is half-activated in $T_1$. On the other hand, $T_2$ is a tree with exactly two even degree vertices $v_2$ and $v_3$ and the path between them has length $1$. By the previous discussion $\nu(T_2)=0$ and $v_2$ is never-activated in $T_2$. Since  $T=T_1v_1v_2T_2$, by Table \ref{table1} we have $\nu(T)=\nu(T_1)+\nu(T_2)+\Delta \nu=1+0+0=1$. Moreover, $v_1$ is half-activated and $v_2$ is never-activated in $T$. By symmetry we also see that $v_3$ is half-activated in $T$.

Now assume  that the length of $P$ is $k>2$ and the claim holds true for all trees with exactly two even degree vertices with the path between them has length less than $k$.  Take a vertex $v_i$ from the vertex sequence of $P$ such that $1\leq i<k+1$. Let $e=(v_i,v_{i+1})$ and $T_1$, $T_2$ be the connected components of $T-e$ containing $v_i$ and $v_{i+1}$, respectively.

 First assume that $k$ is odd. If $i$ is different then $1$ or $k$, then $T_1$ and $T_2$ are trees with exactly two vertices ($v_1$, $v_i$ in $T_1$ and $v_{i+1}$, $v_{k+1}$ in $T_2$ ) with even degree. Assume that $i$ is even, hence the paths between the even degree vertices of $T_1$ and $T_2$ have both odd lengths. Then by the induction hypothesis $\nu(T_1)=\nu(T_2)=0$. Moreover, $v_i$ and $v_{i+1}$ are never-activated in $T_1$ and $T_2$ respectively. Then, by Table \ref{table1} we have $\nu(T)=\nu(T_1)+\nu(T_2)+\Delta \nu=0+0+0=0$ and $v_i$ and $v_{i+1}$ are never-activated in $T$. This proves $v_i$ is never-activated in $T$ except for $i=1$ and $i=k+1$. For $i=1$, $T_1$ is an odd tree and $T_2$ is a tree with exactly two even degree vertices $v_{2}$, $v_{k+1}$ with the path between them has even length. By [\cite{Amin98}, Theorem 3] and by the induction hypothesis $\nu(T_1)=\nu(T_2)=1$ and $v_1$, $v_2$ are half-activated in $T_1$, $T_2$,  respectively. Then, by Table \ref{table1} we see that $v_1$ is never-activated in $T$. By symmetry,  $v_{k+1}$ is never-activated in $T$ as well.

Second assume that $k$ is even. If $i$ is even and different than $k$, then $T_1$ is a tree with exactly two even degree vertices $v_1$, $v_i$ with the path between them has odd length. And $T_2$ is a tree with exactly two even degree vertices $v_{i+1}$, $v_{k+1}$ with the path between them has even length. Then by induction hypothesis $\nu(T_1)=0$, $\nu(T_2)=1$, $v_i$ is never-activated in $T_1$, and $v_{i+1}$ is half-activated in $T_2$. By Table \ref{table1} we have $\nu(T)=\nu(T_1)+\nu(T_2)+\Delta \nu=1+0+0=1$, $v_i$ is never-activated and $v_{i+1}$ is half-activated in $T$. This proves that $v_i$ is never-activated in $T$ if $i$ is even and half-activated in $T$ if $i$ is odd except for $i=1, k,k+1$. We immediately see that $v_{k}$ is never-activated in $T$ using symmetry since $v_{2}$ is never-activated in $T$. And if $i=k$ then  $T_1$ is a tree with exactly two even degree vertices $v_1$, $v_{k}$ with the path between them has odd length, and $T_2$ is an odd tree. By [\cite{Amin98}, Theorem 3] and by the induction hypothesis $\nu(T_1)=0$, $\nu(T_2)=-1$, $v_{k}$ is never-activated in $T_1$, and $v_{k+1}$ is half-activated in $T_2$. Then, by Table \ref{table1} we see that $v_{k+1}$ is half-activated in $T$, and by symmetry $v_{1}$ is half-activated in $T$.
\end{proof}

\begin{thm}
\label{thmN}
 Let $T$ be a tree with number of even degree vertices is equal to $n\neq 0$. Then, $\nu(T) \leq n-1$. Moreover, if there exists a pair of even degree vertices where the path between them has odd length, then $\nu(T) \leq n-2$.
\end{thm}
\begin{proof}
Let $\{v_1,...,v_n\}$ be an enumeration of the set of even degree vertices of $T$. Let, $T_0=T$ and $T_1$ be the tree obtained from $T$ by joining $v_1$ to a single vertex $u_1$ by an edge. Let $T_2$ be the tree obtained from $T_1$ by joining $v_2$ to a single vertex $u_2$ by an edge. Continuing this way we construct the tree $T_i$, $i\in \{1,...,n-1\}$ where for each $k\leq i$, $v_k$ is joined to a single vertex $u_k$ by an edge. Since the single vertex of $K_1$ is always activated in $K_1$, and $K_1$ is always solvable, we see from Table \ref{table1} that $\nu(T_i)-\nu(T_{i-1}) \geq -1$. This implies $\nu(T_{n-1})-\nu(T) \geq -(n-1)$. On the other hand, observe that $T_{n-1}$ is a tree with a single even degree vertex $v_n$. Hence, by [\cite{Amin96}, Theorem 3], $\nu(T_{n-1})=0$. Together with previous inequality we obtain $\nu(T) \leq n-1$.

Now assume that there exists a pair of even degree vertices where the path between them has odd length. Take the enumeration of even degree vertices such that $v_{n-1}$ and $v_n$ are these two vertices. Then $\nu(T_{n-2})=0$ by Theorem \ref{thmP}. Moreover, $\nu(T_{n-2})-\nu(T) \geq -(n-2)$. Hence $\nu(T) \leq n-2$.

\end{proof}

\begin{lem}
\label{lemend}
Let $P$ be a maximal length path of not activated vertices in a solving pattern for the all-ones configuration in a tree. Then the end vertices of $P$ has even degree.
\end{lem}

\begin{proof}
Let $\mathbf{s}$ be a solving pattern for the all-ones configuration and $P$ be a maximal length path of not activated vertices of $\mathbf{s}$. Let $u$ be an end vertex of $P$, $v$ be the vertex in $P$ adjacent to $u$. Since $P$ is a maximal path of not activated vertices,  all vertices adjacent to $u$ other than $v$ must be activated in $\mathbf{s}$. Moreover since $\mathbf{s}$ is a solving pattern for the all ones configuration, and since $u$ and $v$ are not activated, number of vertices adjacent to $u$ other than $v$ must be odd. Hence, degree of $u$ is even.
\end{proof}

\begin{thm}
\label{thmact}
Let $T$ be a tree with at least two even degree vertices. Then, all even degree vertices of $T$ have only even length paths between them if and only if there exists a solving pattern for the all-ones configuration where each even degree vertex is activated.
\end{thm}

\begin{proof}
($\Leftarrow$) We use contraposition for this side of the proof. So assume that there exist even degree vertices $u$ and $w$ with odd length path $P$ between them. Then we want to prove for all solving patterns for the all-ones configuration there exists an even degree vertex which is not activated. Let $\mathbf{s}$ be a solving pattern for the all-ones configuration. Assume $u$ and $w$ are both activated in $\mathbf{s}$, otherwise we are done. Since the length of $P$ is odd and since two activated vertices can not be adjacent in a tree \cite{Conlon99}, there must exists an adjacent pair of not activated vertices of $P$. Hence, there exists a maximal path of not activated vertices.  Then, by Lemma \ref{lemend}, the result follows.

($\Rightarrow$) Now let us assume all even degree vertices of $T$ have only even length paths between them. Let $\sim$ be the equivalence relation on the vertex set such that $u\sim w$ if the path between $u$ and $w$ has even length. Hence, $\sim$ partitions $V(T)$ into two subsets $A$ and $B$, one of which contains all even degree vertices by assumption, let it be $A$. Then, it is easy to see that the characteristic vector of the set $A$ is a solving pattern for the all-ones configuration on which every even degree vertex is activated.

\end{proof}

\begin{thm}
Let $T$ be a tree. Then the followings are equivalent.

(1) $T$ is always solvable and it has even order vertices with the path between them has odd length.

(2) $T$ is always solvable and it has adjacent pair of never activated vertices.

(3) $T$ is obtained from always solvable trees by a Type-$(\mathcal{N},\mathcal{N})$ or a Type-$(\mathcal{A},\mathcal{A},\mathcal{A},\mathcal{A})$ connection.

\end{thm}

\begin{proof}
\emph{Proof of }$(1) \Rightarrow (2)$: Let $P$ be an odd length path between two even degree vertices in $T$, $\mathbf{s}$ be the unique solving pattern for the all-ones configuration.  First recall that two activated vertices in a solving pattern for the all-ones configuration can not be adjacent in a tree \cite{Conlon99}. Together with the assumption that $P$ has odd length, in order not to have an adjacent pair of not activated vertices of $P$ in $\mathbf{s}$, we must have one end vertex of $P$ is not activated while the other end is activated. However if an even degree vertex is not activated in $\mathbf{s}$, then there must have a not activated vertex adjacent to it in order it to have odd number of activated neighbors. The result follows by the fact that being not activated in $\mathbf{s}$ is equivalent to being never activated since $\mathbf{s}$ is the unique solving pattern for the all-ones configuration.

\emph{Proof of }$(2) \Rightarrow (1)$: Let $T$ be always solvable and have adjacent pair of never activated vertices. Then there exist never activated even degree vertices of $T$ by Lemma \ref{lemend}. So the result follows by Theorem \ref{thmact}.

\emph{Proof of }$(2) \Rightarrow (3)$:  Let $T$ be always solvable and have adjacent pair of never activated vertices $x$, $y$. $T=RxyS$ where $R$ and $S$ are the connected components of $T-(x,y)$ containing $x$, $y$, respectively. From Table \ref{table1},
we see that $T$ must be either Type-$(\mathcal{N},\mathcal{N})$ or Type-$(\mathcal{H},\mathcal{H})$ connection of $R$ and $S$, respectively. If the former holds $\nu(R)+\nu(S)=\nu(T)-\Delta\nu=0-0=0$, which implies $R$ and $S$ are always solvable.
So we are done. If the latter holds
$\nu(R)+\nu(S)=\nu(T)-\Delta\nu=0-(-2)=2$. Moreover, $x$ and $y$ are half-activated in $R$ and $S$, respectively. Hence $\nu(R)$ and $\nu(S)$ are nonzero. This implies $\nu(R)=\nu(S)=1$. By Lemma \ref{adj}, there exists a half-activated vertex $w$ of $R$ which is adjacent to $x$ and a half-activated vertex  $z$ of $S$ which is adjacent to $y$. Let $R=WwxX$ where $W$ and $X$ are the connected components of $R-(w,x)$ containing $w$, $x$, respectively. Similarly let $S=YyzZ$ where $Y$ and $Z$ are the connected components of $S-(y,z)$ containing $y$, $z$, respectively.  From
Table \ref{table1} we see $R$ and $S$ are  Type-$(\mathcal{A},\mathcal{A})$ connections of $W$, $X$ and  $Y$, $Z$, respectively. Consequently, $\nu(W)+\nu(X)=\nu(R)-\Delta\nu=1-1=0$, $\nu(Y)+\nu(Z)=\nu(S)-\Delta\nu=1-1=0$ which implies $W$, $X$, $Y$, and  $Z$ are always solvable. Moreover, $w$, $x$, $y$, and $z$ are always-activated in $W$, $X$, $Y$, and $Z$, respectively. Since $T=WwxXxyYyzZ$, this implies $T$ is a Type-$(\mathcal{A},\mathcal{A},\mathcal{A},\mathcal{A})$ connection of always solvable trees.

\emph{Proof of }$(3) \Rightarrow (2)$: From Table \ref{table1} it is clear that Type-$(\mathcal{N},\mathcal{N})$ connection does not change total nullity and produces a pair of never activated vertices. On the other hand a Type-$(\mathcal{A},\mathcal{A},\mathcal{A},\mathcal{A})$ connection of always solvable graphs is a Type-$(\mathcal{H},\mathcal{H})$ connection of two graphs $R_1$ and $R_2$ with nullity $1$ which are themselves produced from Type-$(\mathcal{A},\mathcal{A})$ connection of always solvable graphs. Again from Table \ref{table1} it is clear to see that Type-$(\mathcal{H},\mathcal{H})$ connection produces a pair of never activated vertices. Moreover, the resulting tree has nullity $\nu(R_1)+\nu(R_2)-\Delta\nu=1+1-2=0$.

\end{proof}

\bibliographystyle{plain}

\end{document}